\newtheorem{theorem}{Theorem}
\newtheorem{lemma}[theorem]{Lemma}
\let\oldlemma\lemma
\renewcommand{\lemma}{\oldlemma\normalfont}
\let\oldcorollary\corollary
\renewcommand{\corollary}{\oldcorollary\normalfont}
\let\oldprop\prop
\renewcommand{\prop}{\oldprop\normalfont}
\let\oldrmk\rmk
\renewcommand{\rmk}{\oldrmk\normalfont}
\let\oldexample\example
\renewcommand{\example}{\oldexample\normalfont}
\theoremstyle{definition}
\theoremstyle{remark}
\renewcommand{\rho}{\varrho}
\begin{document}
\title{On consecutive perfect powers with elementary methods}  

\author{Paolo Leonetti}
\address{Department of Statistics, Universit\`a ``Luigi Bocconi'' -- via Roentgen 1, 20136 Milano, Italy}
\email{leonetti.paolo@gmail.com}
\urladdr{https://sites.google.com/site/leonettipaolo/}


\subjclass[2010]{Primary 11D72; Secondary 11A99, 08-02.}

\keywords{Perfect powers, Catalan's conjecture, Diophantine equation, Mih\u{a}ilescu's theorem.} 
\begin{abstract}
Catalan's conjecture claims that the Diophantine equation $x^p-y^q=1$ admits the unique solution $3^2-2^3=1$ in integers $x,y,p,q \ge 2$. The conjecture has been finally proved by P. Mih\u{a}ilescu (2002) using the theory of cyclotomic fields and Galois modules.

Here, relying only on elementary techniques, we prove several instances of this classical result. In particular, we prove the conjecture in the following cases: $p$ even (due to V.A. Lebesgue), $q$ is even (due to L. Euler and Chao Ko), $x$ divides $q$, $y$ divides $x-1$, $y$ is a power of a prime, and $y\le p^{p/2}$.
\end{abstract}
\maketitle
\thispagestyle{empty}


\section{Introduction}\label{sec:introduction}


In $1844$ the Belgian mathematician 
Eug\`{e}ne Charles Catalan \cite{Cat1} 
conjectured that $8$ and $9$ are the only (positive) consecutive perfect powers. That is to say, the unique solution of the Diophantine equation
\begin{equation}\label{eq:main}
x^p-y^q=1
\end{equation}
in integers $x,y,p,q \ge 2$ is given by $3^2-2^3=1$. In this respect, he only made some empirical observations, which he stated without proof: e.g., he considered the special cases $(x+1)^x-x^x=1$ and $x^y-y^x=1$.

The history of the problem dates back to at least Levi ben Gerson (also known as Gersonides), who considered in $1343$ the Diophantine equations 
$$
2^p-3^q=1 \,\, \text{ and }\,\,3^p-2^q=1.
$$

Some years after Catalan made his conjecture, V.A. Lebesgue \cite{Lebesgue} (not to be confused with his more famous namesake H.L. Lebesgue) solved the case $q=2$. In the early 1900's, L. Euler \cite{Eulero} and Chao Ko \cite{Ko} gave a solution of the conjecture in the case $p=2$ (it is worth noting that a particular instance of this result, i.e., the case $p=4$, has been previously shown by Selberg). Their solutions are surveyed in Sections \ref{sec:caseqeven} and \ref{sec:casepeven}, respectively. 

After years of study, R. Tijdeman \cite{Tij} made an important breakthrough in $1976$: by means of Baker's method in trascendence theory, he was able to prove that Catalan's equation \eqref{eq:main} has at most finitely many solutions. In addition, he shows the existence of an effective computable constant $C$ such that $p,q \le C$ whenever $x^p-y^q=1$ for some $x,y \ge 2$. Then, M. Langevin \cite{Langevin} computed the explicit upper bound
$$
C \le e^{e^{e^{e^{730}}}}.
$$
More recently, M. Mignotte \cite{Mignotte} improved this upper bound, showing that every solution of Catalan's equation satisfies
$$
p<7.15\cdot 10^{11}\,\,\text{ and }\,\,q<7.78\cdot 10^{16}.
$$
Unfortunately, it appeared that the required computation to complete the proof was too huge to be feasible (at this time, the conjecture could be proved by case checking only for $p,q<10^7$).

Finally, P. Mih\u{a}ilescu \cite{Miha} solved completely the conjecture in April $2002$, with a brilliant proof which relies on cyclotomic fields and Galois modules (the result was published two years later). In this regard, Catalan's conjecture is nowadays known as \emph{Mih\u{a}ilescu's theorem}. 

Here, we are not going to present Mih\u{a}ilescu's proof, which can be found, e.g., in Schoof's monograph \cite{Cat2}. On the other hand, the aim of this article is to provide and collect elementary proofs for several interesting cases of Catalan's equation. The arguments used can be followed by diligent students who have taken a first course in Number Theory.

First, a remark is in order: it can be assumed without loss of generality that $p$ and $q$ are distinct primes. Indeed, if $p^\prime$ and $q^\prime$ are primes dividing $p$ and $q$, respectively, then
$$
x^p-y^q =X^{p^\prime}-Y^{q^{\prime}},
$$
where $X:=x^{p/p^\prime}$ and $Y:=y^{q/q^\prime}$ are integers $\ge 2$. In addition, if $p=q$ then
\begin{equation}\label{eq:pequalq}
x^p-y^p \ge (y+1)^p-y^p \ge 1+py >1,
\end{equation}
i.e., there are no two consecutive $p$-th powers. With these premises, our main result follows.



\begin{theorem}\label{th:main}
Catalan's conjecture holds in each of the following cases:
\begin{enumerate}[label={\rm (\roman{*})}]
\item\label{item:(i)} $q=2$;
\item\label{item:(ii)} $p=2$;
\item\label{item:(iii)} $x$ is a power of $2$;
\item\label{item:(iv)} $x\equiv 3,5,7 \pmod{8}$;
\item\label{item:(v)} $x$ divides $q$;
\item\label{item:(vi)} $x\equiv 1 \pmod{y}$; 
\item\label{item:(vii)} $y$ is a power of a prime;
\item\label{item:(viii)} $y \le \min\left(\frac{(pq)^p}{q},\left(\frac{q\sqrt{p}}{2}\right)^p\right)$. 
\end{enumerate}
\end{theorem}

In particular, case \ref{item:(viii)} implies that Catalan's conjecture holds if $y\le p^{p/2}$. Proof of Theorem \ref{th:main} follows, case by case, in Sections \ref{sec:caseqeven}-\ref{sec:y5p}. Closing remarks and several open questions related to Catalan's equation are given in Section \ref{sec:final}.

\subsection{Notation} We let $\mathbf{N}$ and $\mathbf{N}^+$ represent the set non-negative integers and positive integers, respectively (in particular, $0 \in \mathbf{N}$). Moreover, given a prime $p$ and $n \in \mathbf{N}^+$, we let $\upsilon_p(n)$ be the $p$-adic valuation of $n$, i.e., the greatest $k \in \mathbf{N}$ such that $p^k$ divides $n$.



\subsection{Acknowledgements}The author is grateful to Salvatore \textsc{Tringali} (University of Graz, Austria) for suggesting remarks that improved the readability of the article.


\section{Proof of Theorem \ref{th:main}.\ref{item:(i)}: $q=2$.}\label{sec:caseqeven}

Here, we are going to show that the Diophantine equation
\begin{equation}\label{eq:q2}
y^2+1=x^p
\end{equation}
has no integral solutions $\ge 2$. 

The case $p=2$ does not have any solution, as it follows from inequality \eqref{eq:pequalq}. Hence, let us assume that $p\ge 3$ (here, we recall that there is no loss of generality to say that $p$ is a prime). 
Clearly, $x$ and $y$ have different parity. If $x$ is even then $y$ is odd and $y^2+1$ has to be divisible by $4$, which is turn impossible because
\begin{displaymath}
0\equiv x^p=y^2+1=(\underbrace{y+1}_{\mathrm{even}})(\underbrace{y-1}_{\mathrm{even}})+2 \equiv 2\pmod{4}.
\end{displaymath}
This implies that $x$ has to be odd and $y$ even. 

At this point, let us rewrite Equation \eqref{eq:q2} in $\mathbf{Z}[i]$ as
\begin{displaymath}
(y+i)(y-i)=x^p,
\end{displaymath}
where $i$ is the imaginary number for which, by definition, $i^2=-1$. Then, on the one hand, the greatest common divisor $\mathrm{gcd}(y+i,y-i)=\mathrm{gcd}(y+i,2i)$ divides $2i$. On the other hand, $i$ is unit in $\mathbf{Z}[i]$ and $x$ is odd. It follows that $y+i$ and $y-i$ have to be (coprime) $p$-th powers in $\mathbf{Z}[i]$. Therefore there exist integers $a,b$ for which
$$
y+i=(a+bi)^p=\sum_{j=0}^p{\binom{p}{j}a^j(bi)^{p-j}}. 
$$
In particular, the imaginary parts of the both sides are equal, i.e.,
\begin{equation}\label{qeven3} 
1=\sum_{j=0}^{\frac{p-1}{2}}{\binom{p}{2j}a^{2j}b^{p-2j}i^{p-2j-1}}=b\sum_{j=0}^{\frac{p-1}{2}}{\binom{p}{2j}a^{2j}b^{p-2j-1}(-1)^{\frac{p-1}{2}-j}}. 
\end{equation}
Since the right hand side is divisible by $b$, then $|b|=1$. Multiplying both sides by $(-1)^{\frac{p-1}{2}}b$, it follows that we can rewrite Equation \eqref{qeven3} as
\begin{equation}\label{eq:fkjhdgs} 
\sum_{j=0}^{\frac{p-1}{2}}{\binom{p}{2j}(-a^2)^{j}}=(-1)^{\frac{p-1}{2}}b. 
\end{equation}

Considering also that 
$$
y-i=\overline{y+i}=\overline{(a+bi)^p}=\left(\overline{a+bi}\right)^p=(a-bi)^p,
$$
we obtain that, for some odd $x$, it holds
$$
x^p=(a+bi)^p(a-bi)^p=(a+i)^p(a-i)^p=(a^2+1)^p.
$$
In particular, $a$ is even and different from $0$ (because $a^2+1=x\ge 2$). It follows that
$$
\sum_{j=0}^{\frac{p-1}{2}}{\binom{p}{2j}(-a^2)^{j}} \equiv 1\pmod 4,
$$
which implied, together with \eqref{eq:fkjhdgs}, that 
\begin{equation}\label{eq:finalp}
\sum_{j=0}^{\frac{p-1}{2}}{\binom{p}{2j}(-a^2)^{j}}=1.
\end{equation}

At this point, if $p=3$ then the above equation implies $a=0$, which is impossible. Otherwise $\frac{p-1}{2}\ge 2$, hence Equation \eqref{eq:finalp} can be further rewritten as
\begin{equation}\label{qeven4}
\sum_{j=2}^{\frac{p-1}{2}}{\binom{p}{2j}(-a^2)^{j}}=a^2\binom{p}{2}.
\end{equation}

Finally, it is claimed that the number of factors $2$ dividing each term in the left hand side of the above equation is strictly greater than the one on the right hand side. Indeed, considering that the identity
$$
\binom{p}{2j}=\frac{1}{j(2j-1)}\binom{p-2}{2j-2} \binom{p}{2}
$$
holds for all $j=2,\ldots,\frac{p-1}{2}$, we obtain that 
%
\begin{displaymath}
\begin{split}
\upsilon_2\left(a^{2j} \binom{p}{2j} \right) &=\upsilon_2\left(\frac{a^{2j}}{j(2j-1)}\binom{p-2}{2j-2} \binom{p}{2}\right) \\
&\ge 2j\upsilon_2(a)-\upsilon_2(j)+\upsilon_2\left(\binom{p}{2}\right) \\
&\ge \left(2j-\upsilon_2(j)\right)\upsilon_2(a)+\upsilon_2\left(\binom{p}{2}\right),
\end{split}
\end{displaymath}
which in turn is strictly greater than $\upsilon_2\left(a^2\binom{p}{2}\right)=2\upsilon_2(a)+\upsilon_2\left(\binom{p}{2}\right)$. 
This implies that Equation \eqref{qeven4} has no integral solutions, concluding the proof.


\section{Proof of Theorem \ref{th:main}.\ref{item:(ii)}: $p=2$.}\label{sec:casepeven}

Since two squares of positive integers cannot be consecutive, it is enough to check the cases $q=3$ and $q\ge 5$. 



\subsection{Case $q=3$.} In this case, Equation \eqref{eq:main} can be rewritten as
\begin{equation}\label{eq:yyycubes}
y^3=(x+1)(x-1).
\end{equation}
Notice that $x+1$ and $x-1$ are positive integers which differ by $2$, hence they cannot be both cubes. Since their greatest common divisor divides their difference, then $\mathrm{gcd}(x+1,x-1)$ is exactly $2$. In particular, $x$ is odd and $y$ is even: let us say $x=2m+1$ and $y=2n$ for some $m,n \in \mathbf{N}^+$. Then Equation \eqref{eq:yyycubes} can be rewritten as
\begin{displaymath}
\binom{m+1}{2}=n^3.
\end{displaymath}
Setting $m=1$ we obtain the solution $(x,y)=(3,2)$. Then, it is claimed that every triangular number greater than $1$ is not a cube. 

If $m$ is even, say $m=2k$ for some $k \in \mathbf{N}^+$, then $k(2k+1)=n^3$ and $\mathrm{gcd}(k,2k+1)=1$ so that $k$ and $2k+1$ need to be both cubes. Otherwise $m$ is odd, say $m=2k-1$ for some $k \in \mathbf{N}^+$, then $k(2k-1)=n^3$ and $\mathrm{gcd}(k,2k-1)=1$ so that $k$ and $2k-1$ need to be (coprime) cubes.

Therefore, in both cases we have to find $k$ and $\varepsilon \in \{-1,1\}$ such that $k=a^3$ and $2k+\varepsilon=b^3$ for some coprime $a,b \in \mathbf{N}^+$. By construction, it follows that $$b^3+(-\varepsilon)^3=b^3-\varepsilon=2k=2a^3,$$ hence it is sufficient to show that the Diophantine equation
\begin{equation}\label{euler}
\alpha^3+\beta^3=2\gamma^3 
\end{equation}
admits no non-trivial solutions in $\mathbf{Z}$ (a step-by-step solution of a slightly more general equation than \eqref{euler} can be found in \cite{Sierp}). If $(\alpha,\beta,\gamma)$ is a solution of \eqref{euler} and there exists a prime $r$ dividing both $\alpha$ and $\beta$ then $r$ divides also $\gamma$ and $(\alpha/r,\beta/r,\gamma/r)$ still satisfies \eqref{euler}. That is why we can assume without loss of generality that $\mathrm{gcd}(\alpha,\beta)=1$ and by symmetry $\alpha \le \beta$.

In particular, $\alpha$ and $\beta$ have to be odd, hence we can define coprime integers $u,v \in \mathbf{N}$ such that
\begin{displaymath}
u=\frac{\alpha+\beta}{2}\,\,\,\,\,\text{ and }\,\,\,\,\,v=\frac{\alpha-\beta}{2}.
\end{displaymath}
Therefore, by construction, we get $u(u^2+3v^2)=\gamma^3$, and it is claimed that there are no solutions whenever $v$ is positive.

\begin{list}{$\circ$}{}
\item \textsc{Case $3$ does not divide $u$:} If $u$ is not divisible by $3$ then $\mathrm{gcd}(u,u^2+3v^2)=1$, so there exist coprime integers $c,d\in \mathbf{N}$ such that $u=c^3$ and $u^2+3v^2=d^3$. Defining $t=d-c^2$, we obtain $t(t^2+3tc^2+3c^4)=3v^2$. 

Checking the remainders modulo $3$, we deduce that $t$ is divisible by $3$, $v$ is divisible by $3$, and $t$ is then divisible by $9$. Substituting $t=9e$ and $v=3f$, we can rewrite the equation as $e(27e^2+9e c^2+c^4)=f^2$. At this point, it is straightforward to verify that $\mathrm{gcd}(e,27e^2+9e c^2+c^4)=1$, hence they have to be both squares. It means that we end to solve in $\mathbf{Z}$ an equation in the form 
\begin{equation}\label{form}
\mathfrak{a}^4+9\mathfrak{a}^2\mathfrak{b}^2+27\mathfrak{b}^4=\mathfrak{c}^2.
\end{equation}
\item \textsc{Case $3$ divides $u$:} If $u$ is divisible by $3$, with a similar reasoning, we obtain that there exist $w,z \in \mathbf{N}$ such that $u=9w$ and $v=3z$, so that $w(27w^2+v^2)$ is a cube. Moreover $\mathrm{gcd}(w,27w^2+v^2)=1$, therefore they have to be both cubes, namely $w=\chi^3$ and $27w^2+v^2=\delta^3$ for some $\chi, \delta \in \mathbf{N}$. Define $\nu=\delta-3\chi^2$; then $\nu(\nu^2+9\chi^2\nu+27\chi^4)$ is a square. Again, these factors are coprime and we obtain an equation of the type \eqref{form}.
\end{list}

It means that it is enough to show that Equation \eqref{form} has no solutions in non-zero integers. Without loss of generality we can assume that  $\mathrm{gcd}(\mathfrak{a},\mathfrak{b})=1$. At this point, if $\mathfrak{a}$ is even then $4$ divides $27\mathfrak{b}^4-\mathfrak{c}^2$. Looking at this divisibility modulo $4$, it implies that also $\mathfrak{b}$ is even, contradicting our coprimality assumption $\mathrm{gcd}(\mathfrak{a},\mathfrak{b})=1$. In addition, if $\mathfrak{a}$ and $\mathfrak{b}$ are both odd then, by reasoning modulo $8$, it is easily seen that Equation \eqref{form} has no integral solutions. 

It follows that if $(\mathfrak{a},\mathfrak{b},\mathfrak{c})$ is a solution of Equation \eqref{form} then $\mathfrak{a}$ is odd and $\mathfrak{b}$ is even. In particular, we can define the integer $\mathfrak{m}=\mathfrak{b}/2$. Note that $\mathfrak{a}$ is not divisible by $3$, otherwise $\mathfrak{b}$ would be divisible by $3$ too, looking at the equation modulo $81$. Substituting we can rewrite the equation as
\begin{equation}\label{xyz}
27\mathfrak{m}^4=\left(\frac{\mathfrak{c}+\mathfrak{a}^2}{2}+9\mathfrak{m}^2\right)\left(\frac{\mathfrak{c}-\mathfrak{a}^2}{2}-9\mathfrak{m}^2\right).
\end{equation}
These factors are coprime and positive since their sum and product are both positive. 

Only two cases are possible: in the first case, the factors are $27a^4$ and $b^4$, respectively, for some coprime integers $a,b \in \mathbf{N}$. It implies $3$ divides $27a^4-18\mathfrak{m}^2=b^4+\mathfrak{a}^2$, which is impossible since $-1$ is not a quadratic residue modulo $3$. 

In the second case, the factors are $a^4$ and $27b^4$, respectively, for some coprime integers $a,b \in \mathbf{N}$. It implies that $a^4-18\mathfrak{m}^2=27b^4+\mathfrak{a}^2$, with $ab=\mathfrak{m}$. The variable $a$ cannot be even, looking at the equation modulo $8$. Moreover $a$ and $b$ cannot be both odd since $\mathfrak{a}$ is odd, hence by force $b$ is even. To sum up, we can rewrite Equation \eqref{xyz} as
\begin{equation}\label{xyzfinal}
27b^4=\left(\frac{a^2+\mathfrak{a}}{2}-\frac{9}{2}b^2\right)\left(\frac{a^2-\mathfrak{a}}{2}-\frac{9}{2}b^2\right)
\end{equation}
with $a,b,\mathfrak{a}$ integers such that $a$ and $\mathfrak{a}$ are odd and $b$ is even. 

Similarly, it is not difficult to check that the factors in Equation \eqref{xyzfinal} are coprime and strictly positive, implying that they are, in some order, in the form $27c^4$ and $d^4$ for some $c,d \in \mathbf{N}$. This means that $c^4+9c^2d^2+27c^4=a^2$, which is again in the form of Equation \eqref{form}. 

Notice that $a\le \mathfrak{m} < \mathfrak{b} < \mathfrak{c}$ whenever $\mathfrak{c}\ge 1$. It implies that, if $(\mathfrak{a}^\star,\mathfrak{b}^\star,\mathfrak{c}^\star)$ is a solution of \eqref{form} which minimizes $\mathfrak{c}$ with $\mathfrak{c}\ge 1$, then we can construct another solution $(\mathfrak{a}^{\prime},\mathfrak{b}^{\prime},\mathfrak{c}^{\prime})$ such that $\mathfrak{c}^{\prime}$ is smaller than $\mathfrak{c}^\star$. Therefore the unique solution of Equation \eqref{form} is $(0,0,0)$. 

\subsection{Case $q \ge 5$.} We have to solve the equation
\begin{equation}\label{pevenq5}
\textstyle x^2-y^q=1
\end{equation}    
in integers $\ge 2$, where $q$ is a prime $\ge 5$.

If $x$ is even then $\mathrm{gcd}(x+1,x-1)=1$ and $(x+1)(x-1)=y^q$, implying that $x+1$ and $x-1$ are two coprime $q$-powers that differ by $2$, which is impossible. Hence $x$ has to be odd and $y$ even. 

The following elementary result, which we are not going to prove here, belongs to the folkore and it is commonly known as \emph{Lifting the Exponent}. Typically it is attributed to \'{E}. Lucas \cite{Lucas} and R.D. Carmichael \cite{Carm}, the latter having fixed an error in Lucas' original work in the $2$-adic case.
\begin{lemma}\label{lemma1}
For all integers $a,b,n$ and primes $p$ such that
$n$ is positive, $p$ does not divide $ab$ and $p$ divides $a-b$, the following ones hold:
\begin{list}{$\circ$}{}
\item If $p \ge 3$, then $\upsilon_p(a^n - b^n) =
\upsilon_p(a - b) + \upsilon_p(n)$;
\item If $\upsilon_2(a-b)\ge 2$, then $\upsilon_2(a^n - b^n) =
\upsilon_2(a - b)+\upsilon_2(n)$;
\item If $p = 2$ and $n$ is even, then $\upsilon_2(a^n - b^n) =
\upsilon_2(a - b)+\upsilon_2(a + b) + \upsilon_2(n)-1$.
\end{list}
\end{lemma}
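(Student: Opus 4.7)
The plan is to reduce the statement to two base cases, $\mathrm{gcd}(n, p) = 1$ and $n = p$, and then assemble the general formula via the decomposition $n = p^k m$ with $\mathrm{gcd}(m, p) = 1$. The workhorse identity throughout is
\begin{displaymath}
a^n - b^n = (a - b) \sum_{i=0}^{n-1} a^{n-1-i} b^i.
\end{displaymath}
When $\mathrm{gcd}(n, p) = 1$, the sum on the right reduces to $n b^{n-1}$ modulo $p$ (using $a \equiv b \pmod{p}$), which is a unit modulo $p$ by the hypotheses, so $\upsilon_p(a^n - b^n) = \upsilon_p(a - b)$. This disposes of the situation $\upsilon_p(n) = 0$ in all three bullets.

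For the base case $n = p$, write $a = b + c p^s$ with $s = \upsilon_p(a - b) \ge 1$ and $p \nmid c$, and expand each $a^{p-1-i}$ to first order in $cp^s$. One obtains
\begin{displaymath}
\sum_{i=0}^{p-1} a^{p-1-i} b^i = p b^{p-1} + c p^s b^{p-2} \binom{p}{2} + R,
\end{displaymath}
where $R$ collects higher-order terms and satisfies $\upsilon_p(R) \ge 2s \ge 2$. For odd $p$, $p$ divides $\binom{p}{2} = p(p-1)/2$, so the middle term has valuation exactly $s + 1 \ge 2 > 1$ and the sum has $p$-adic valuation exactly $1$, yielding $\upsilon_p(a^p - b^p) = s + 1$. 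For $p = 2$ one instead factors $a^2 - b^2 = (a - b)(a + b)$ and observes that $\upsilon_2(a - b) \ge 2$ forces $\upsilon_2(a + b) = 1$ (since $a + b = (a - b) + 2b$ and $\upsilon_2(2b) = 1$), which settles the base case under the second bullet's hypothesis. Iterating the base case $k$ times and applying the coprime case once to the remaining exponent $m$ then proves the first two bullets.

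For the third bullet I would iterate the factorisation $a^{2r} - b^{2r} = (a^r - b^r)(a^r + b^r)$. Writing $n = 2^k m$ with $m$ odd gives
\begin{displaymath}
\upsilon_2(a^n - b^n) = \upsilon_2(a^m - b^m) + \sum_{j=0}^{k-1} \upsilon_2\bigl(a^{2^j m} + b^{2^j m}\bigr).
\end{displaymath}
The oddness of $m$ forces $\upsilon_2(a^m - b^m) = \upsilon_2(a - b)$ and $\upsilon_2(a^m + b^m) = \upsilon_2(a + b)$ via the coprime argument (for the latter, use $a^m + b^m = a^m - (-b)^m$); for $j \ge 1$ the squares $a^{2^j m}$ and $b^{2^j m}$ are each $\equiv 1 \pmod{8}$, so the corresponding term equals $1$. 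Summing produces $\upsilon_2(a - b) + \upsilon_2(a + b) + (k - 1) = \upsilon_2(a - b) + \upsilon_2(a + b) + \upsilon_2(n) - 1$. The main obstacle is precisely the $p = 2$ failure of the odd-prime lifting step when $\upsilon_2(a - b) = 1$: the per-doubling peel-off of a factor $a^r + b^r$ above is the clean way around it, and the $-1$ in the formula is exactly the bookkeeping for $\binom{2}{2} = 1$ not supplying the missing factor of $p$.
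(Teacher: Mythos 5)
Your proof is correct. Note, however, that the paper deliberately gives no proof of Lemma \ref{lemma1}: it is stated as a folklore result (``which we are not going to prove here'') with attribution to Lucas and Carmichael, so there is no in-paper argument to compare yours against. What you supply is the standard complete proof, and all the steps check out: the coprime-exponent reduction via $\sum_i a^{n-1-i}b^i \equiv n b^{n-1} \pmod{p}$, the single-lift case $n=p$ with the error term of valuation $\ge 2s$ and the key fact that $p \mid \binom{p}{2}$ exactly when $p$ is odd, the preservation of the hypothesis $\upsilon_2(a^2-b^2)\ge 2$ under iteration in the second bullet, and the telescoping factorisation $a^{2^k m}-b^{2^k m}=(a^m-b^m)\prod_{j=0}^{k-1}(a^{2^j m}+b^{2^j m})$ together with $a^{2^j m}\equiv b^{2^j m}\equiv 1 \pmod 8$ for $j\ge 1$ in the third. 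Your closing remark correctly identifies why the odd-prime argument breaks at $p=2$ when $\upsilon_2(a-b)=1$.
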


Based on this result, which will be useful also for other cases, we prove the following:
\begin{lemma}\label{lemma2}
Let $p,q$ be primes which are not both equal to $2$ and $a,b$ be distinct coprime integers such that $a^{p}-b^{p}$ is a $q$-th power not divisible by $p$. Then $a-b$ is a $q$-th power.
\end{lemma}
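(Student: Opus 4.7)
The plan is to factor $a^p - b^p$ into two coprime pieces, one of which is $a-b$, and then deduce that each piece must individually be a $q$-th power.

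Suppose first that $p$ is odd. Write
\[
a^p - b^p = (a-b)\,S, \qquad S := \sum_{i=0}^{p-1} a^{p-1-i}b^i.
\]
Since $a \equiv b \pmod{a-b}$, reduction gives $S \equiv p\,a^{p-1} \pmod{a-b}$, and coprimality of $a$ and $b$ forces $\mathrm{gcd}(a, a-b)=1$; hence $\mathrm{gcd}(a-b, S)$ divides $p$. To rule out $p \mid (a-b)$: if this held, then $\mathrm{gcd}(a,b)=1$ would imply $p \nmid ab$, so Lemma \ref{lemma1} (applied with $n=p$) would yield $\upsilon_p(a^p-b^p) = \upsilon_p(a-b) + \upsilon_p(p) \ge 2$, contradicting the hypothesis that $p$ does not divide $a^p-b^p$. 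Thus $\mathrm{gcd}(a-b, S)=1$.

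Suppose now $p=2$; by hypothesis $q$ is then odd. Here $a^2-b^2=(a-b)(a+b)$, and $\mathrm{gcd}(a-b,a+b)$ divides $\mathrm{gcd}(2a,2b)=2$. If $2 \mid a-b$, then $a$ and $b$ (being coprime with even difference) are both odd, so $a+b$ is even and $4 \mid a^2-b^2$, again contradicting the hypothesis that $2 \nmid a^2-b^2$. Hence $\mathrm{gcd}(a-b,a+b)=1$.

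In both cases we have a coprime factorization of the $q$-th power $a^p-b^p$ with $a-b$ as one of the factors. After possibly swapping $a$ and $b$ we may assume $a>b$; when $p$ is odd this makes both $a-b$ and $S$ positive, and when $p=2$ the oddness of $q$ means signs do not obstruct the decomposition. Unique factorization in $\mathbf{Z}$ then forces each coprime factor to be (up to sign) a $q$-th power, so in particular $a-b$ is.

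The only subtle step is eliminating the possibility $p \mid (a-b)$ in the odd case, where Lemma \ref{lemma1} is precisely the tool required. The separate treatment of $p=2$ is forced because $S$ collapses to $a+b$ and Lemma \ref{lemma1} in the $2$-adic case requires extra parity hypotheses that are replaced here by the elementary observation that $a$ and $b$ must both be odd.
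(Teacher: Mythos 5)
Your proof is correct and follows the same basic plan as the paper's: factor $a^p-b^p$ as $(a-b)$ times its cofactor, show the two factors are coprime, and invoke unique factorization, treating the sign issue for $q=2$ separately. The one real difference is the coprimality step. The paper applies Lemma \ref{lemma1} at each prime $r$ dividing $a-b$ to conclude $\upsilon_r\left(\frac{a^p-b^p}{a-b}\right)=\upsilon_r(p)=0$; you instead use the elementary congruence $S\equiv p\,a^{p-1}\pmod{a-b}$ to see that the gcd divides $p$, and then exclude $p\mid a-b$ (where LTE is actually overkill: $p\mid a-b$ gives $p\mid a^p-b^p$ immediately). Your route is a bit more elementary and quietly avoids a small wrinkle in the paper's version, namely the case $r=2$, $\upsilon_2(a-b)=1$ with $p$ odd, which none of the three stated bullets of Lemma \ref{lemma1} literally covers. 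On the sign question, the paper rules out $a-b=-d^2$ by comparing signs of $a^p-b^p$ and $a-b$, whereas you rely on positivity; just be aware that your phrase \textquotedblleft after possibly swapping $a$ and $b$\textquotedblright\ is only legitimate when $q$ is odd (for $q=2$ the swap would turn $c^2$ into $-c^2$), but it is also never needed there, since $a^p-b^p=c^2>0$ with $p$ odd already forces $a>b$. Stating that explicitly would make the argument airtight.
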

\begin{proof}
By hypothesis, there exists an integer $c$ not divisible by $p$ such that 
\begin{equation}\label{eq:factors}
(a-b)\left(\frac{a^p-b^p}{a-b}\right)=c^q.
\end{equation}
Then, let us suppose that there exists a prime $r$ such that
$$
r \,\,\text{ divides }\,\, \mathrm{gcd}\left(a-b, \frac{a^p-b^p}{a-b}\right).
$$
According to Euler-Fermat's theorem, it holds $a^p-b^p \equiv a-b \bmod{p}$. Since $p$ does not divide $a^p-b^p$, then $r\neq p$. Hence, we obtain by Lemma \ref{lemma1} that, if $r$ divides $a-b$, then
\begin{displaymath}
\upsilon_r\left(\frac{a^p-b^p}{a-b}\right)=\upsilon_r(p)=0.
\end{displaymath}
This implies that the factors in Equation \eqref{eq:factors} are coprime. 

If $q \ge 3$, then we are done because each of them has to be a $q$-th power. Otherwise, we are just left to check the case $q=2$ and $p\ge 3$, together with $a-b=-d^2$, for some non-zero integer $d$. But this is impossible since it would imply that
\begin{displaymath}
\frac{a^p-b^p}{a-b}=\frac{c^2}{-d^2} < 0,
\end{displaymath}
and in particular $a^p-b^p$ and $a-b$ should have different signs.
\end{proof}

At this point, since $\mathrm{gcd}(x+1,x-1)=2$, we can define integers $\varepsilon \in \{-1,1\}$ and coprime $a,b \in \mathbf{N}^+$ such that $x\equiv \varepsilon\pmod{4}$, $y=2ab$, $x+\varepsilon=2a^q$, and $x-\varepsilon =2^{q-1}b^q$. 

Since $q \ge 5$ and $x\ge 2$, we deduce
\begin{displaymath} 
\left(\frac{a}{b}\right)^q = 2^{q-2} \frac{x+\varepsilon}{x-\varepsilon} \ge 8\text{ }\frac{x-1}{x+1}\ge 2, 
\end{displaymath}
implying, in particular, that $a$ is greater than $b$. Moreover, we have by construction
\begin{displaymath} 
a^{2q}-(2\varepsilon b)^q=\left(\frac{x+\varepsilon}{2}\right)^2-2\varepsilon (x-\varepsilon)= \left(\frac{x-3\varepsilon}{2}\right)^2.
\end{displaymath}

Then, according to Lemma \ref{lemma2}, if $q$ does not divide $\frac{1}{2}(x-3\varepsilon)$ then $a^2-2\varepsilon b$ has to be a square: this is not possible since $a^2 \neq a^2-2\varepsilon b$ and $|2\varepsilon b|=2b \le 2(a-1)$, with the consequence that
\begin{displaymath}
(a-1)^2<a^2-2\varepsilon b<(a+1)^2. 
\end{displaymath}
It follows that $q$ divides $\frac{1}{2}(x-3\varepsilon)$ and in particular $q$ does not divide $x$ since $q \ge 5$. Again by Lemma \ref{lemma2}, rewriting Equation \eqref{pevenq5} as $x^2=y^q-(-1)^q$, there exists an integer $\zeta\ge 1$ such that $y-(-1)=\zeta^2$. In particular $\zeta$ is a odd integer and $y$ is not a square, since by assumption $y\ge 2$. It means that $(\zeta,1)$ and $(x,y^{\frac{1}{2}(q-1)})$ are two solutions of the Pell-equation $\alpha^2-y\beta^2=1$. Since $(\zeta,1)$ is its fundamental solution (see for example \cite{Pell} for the theory underlying Pell-equations), there exists a positive integer $m$ such that 
\begin{equation}\label{pell}
x+y^{\frac{q-1}{2}}\sqrt{y}=(\zeta+\sqrt{y})^m.
\end{equation}

Then, in $\mathbf{Z}/y\mathbf{Z}[\sqrt{y}]$ we obtain $x=\zeta^m+m\zeta^{m-1}\sqrt{y}$, which implies that $y$ divides $m\zeta^{m-1}$. In particular, $y$ is even, $\zeta$ is odd, and $m$ is even. 

Looking finally at Equation \eqref{pell} in $\mathbf{Z}/\zeta\mathbf{Z}[\sqrt{y}]$, we obtain $x+y^{(q-1)/2}\sqrt{y}=y^{m/2}$, so that $\zeta$ divides $y^{(q-1)/2}$. Supposing that $\zeta \ge 2$, there exists a prime $r$ such that $r$ divides $\zeta$, which in turn divides $y$, and by construction $r$ divides $\zeta^2-y=-1$, which is impossibile. Therefore we have shown that, given a prime $q\ge 5$, if $x,y,\zeta,m$ verify Equation \eqref{pell} then $\zeta=1$, with the consequence that Equation \eqref{pevenq5} has no solutions.


\section{Proof of Theorem \ref{th:main}.\ref{item:(iii)}: $x$ is a power of $2$.}\label{sec:pow2}

Assuming that $q$ is an odd prime by case \ref{item:(i)}, it is sufficient to rewrite the Diophantine equation \eqref{eq:main} as
\begin{displaymath}
x^p=(1+y)(\underbrace{1-y+y^2-\cdots+y^{q-1}}_{\mathrm{odd }\,\ge 3}),
\end{displaymath}
which does not have clearly any solution.


\section{Proof of Theorem \ref{th:main}.\ref{item:(iv)}: $x \equiv 3,5$ or $7\pmod{8}$.}\label{sec:wfvd}

According to the cases \ref{item:(i)} and \ref{item:(ii)}, we can assume that $p$ and $q$ are distinct odd primes. Note that, since $x$ has remainder $3$, $5$, or $7$ modulo $8$, then $x$ is odd. Hence, $y$ has to be even. It follows that $8$ divides $x^p-1$, which is impossible since
\begin{displaymath}
x^p-1=x \underbrace{\left(x^{\frac{p-1}{2}}+1\right)\left(x^{\frac{p-1}{2}}-1\right)}_{\text{two consecutive even numbers}}+(x-1) \equiv x-1 \pmod{8}
\end{displaymath}
and $8$ does not divide $x-1$ by hypothesis.


\section{Proof of Theorem \ref{th:main}.\ref{item:(v)}: $x$ divides $q$.}\label{sec:xq}

Notice that if $x^p-y^q=1$ for some positive integers $x,y,p,q$ greater than $1$ and $x$ divides $q$ then 
\begin{displaymath}
x^p-Y^x=1,
\end{displaymath}
where $Y:=y^{q/x}$. Accordingly, we can assume without loss of generality that $x=q$ and solve the Diophantine equation $x^p-y^x=1$. By the case \ref{item:(ii)} we know that $x$ has to be odd, hence $y$ is even.

Since $y+1$ divides $y^x+1 = x^p$ then there exists an odd prime $r$ dividing both $x$ and $y+1$. It follows by Lemma \ref{lemma1} that 
\begin{displaymath}
\upsilon_r(x^p)=\upsilon_r\left((y+1)\cdot \frac{y^x+1}{y+1}\right)=\upsilon_r(y+1)+\upsilon_r(x),
\end{displaymath}
so that $(p-1)\upsilon_r(x)=\upsilon_r(y+1)$. In particular, we obtain
\begin{displaymath}
y+1 \ge r^{\upsilon_r(y+1)}= r^{(p-1)\upsilon_r(x)} \ge 3^{p-1} \ge 2^p+1, 
\end{displaymath}
where the last inequality holds for all $p\ge 3$. Therefore $y$ is greater than or equal to $2^p$, with the consequence that
\begin{displaymath}
x^p=y^x+1\ge 2^{px}+1>2^{px}.
\end{displaymath}
We conclude that $x$ is greater than $2^x$ for some integer $x\ge 2$, which is impossible.


\section{Proof of Theorem \ref{th:main}.\ref{item:(vi)}: $x\equiv 1\pmod{y}$.}\label{sec:sec:yxminus}

In this case, we are assuming that there exists $z \in \mathbf{N}^+$ such that $x=yz+1$. Again, we can assume that $p$ and $q$ are distinct odd primes in Equation \eqref{eq:main}, which in turn can be rewritten as $y^q=(yz+1)^p-1$, i.e., 
\begin{displaymath}
y^{q-1}=z \left(1+(yz+1)+\cdots+(yz+1)^{p-1}\right).
\end{displaymath}
Since the second factor has remainder $p$ modulo $z$, then 
\begin{equation}\label{eq:gcd1}
\mathrm{gcd}\left(z,1+(yz+1)+\cdots+(yz+1)^{p-1}\right)\,\,\,\text{ divides }\,\,\,p.
\end{equation} 

Let us first assume that the greatest common divisor in \eqref{eq:gcd1} is $1$. Then there exists coprime $\alpha,\beta \in \mathbf{N}$ such that
\begin{displaymath}
z=\alpha^{q-1} \,\,\,\, \text{ and } \,\,\,\,1+(yz+1)+\cdots+(yz+1)^{p-1}=\beta^{q-1}.
\end{displaymath}
Since $\alpha \beta=y$ by construction, then $yz=\alpha^q \beta$, which implies in turn that
\begin{displaymath}
\sum_{j=0}^{p-1}\left(1+\alpha^q\beta\right)^j=\beta^{q-1}.
\end{displaymath}
At this point, $\beta$ is an integer greater than $1$ and the right hand side has remainder $p$ modulo $\beta$. Therefore $\beta$ has to be exactly $p$, with the consequence that
\begin{displaymath}
p^{q-1}=\sum_{j=0}^{p-1}\left(1+\alpha^qp\right)^j=\sum_{j=0}^{p-1}\sum_{k=0}^j\binom{j}{k}\alpha^{qk}p^k=p+\sum_{j=1}^{p-1}\sum_{k=1}^j\binom{j}{k}\alpha^{qk}p^k.
\end{displaymath}
One the one hand, it is easily seen that
\begin{displaymath}
\begin{split}
\sum_{j=1}^{p-1}\sum_{k=1}^j\binom{j}{k}\alpha^{qk}p^k&=\left(1+2+\cdots+(p-1)\right)\alpha^qp+\sum_{j=2}^{p-1}\sum_{k=2}^j\binom{j}{k}\alpha^{qk}p^k \\ 
&=p^2 \frac{p-1}{2}\alpha^q+p^2 \sum_{j=2}^{p-1}\sum_{k=2}^j\binom{j}{k}\alpha^{qk}p^{k-2} \equiv 0\pmod{p^2}.
\end{split}
\end{displaymath}
On the other hand, since $q$ is an odd prime then $p^2$ divides $p^{q-1}$, the above sum cannot be a multiple of $p^2$.

Let us assume now that the greatest common divisor in \eqref{eq:gcd1} is $p$. Then there exist $u,v \in \mathbf{N}$ such that 
\begin{displaymath}
z=pu, \,\,\,\, \,\,\,\,1+(yz+1)+\cdots+(yz+1)^{p-1}=pv,\,\,\,\, \text{ and } \,\,\,\,y^{q-1}=p^2uv.
\end{displaymath}
Hence, $y$ is a divisible by $p$, and there exist positive integers $h,k$ such that $y=p^hk$ such that $p$ does not divide $k$. As in the previous case, we can see that $p$ does not divide $v$, indeed
\begin{displaymath}
\sum_{j=0}^{p-1}(yz+1)^j=\sum_{j=0}^{p-1}\left(\underbrace{p^{h+1}ku}_{\text{divisible by }p^2}+1\right)^j \equiv p\pmod{p^2}.
\end{displaymath}
It implies that there exist coprime positive integers $\gamma,\delta$, not divisible by $p$, such that 
\begin{displaymath}
z=p^{h(q-1)-1}\gamma^{q-1}, \,\,\,\, \,\,\,\,1+(yz+1)+\cdots+(yz+1)^{p-1}=p\delta^{q-1},\,\,\,\, \text{ and } \,\,\,\,y=p^h\gamma\delta.
\end{displaymath}
Accordingly, we deduce that
\begin{displaymath}
p\delta^{q-1}=\sum_{j=0}^{p-1}(yz+1)^j=\sum_{j=0}^{p-1}\left(p^{hq-1}\gamma^q\delta+1\right)^j.
\end{displaymath}
Multiplying both sides by $p^{hq-1}\gamma^q\delta$ we get
\begin{equation}\label{eq:final2}
y^q=\left(p^{hq-1}\gamma^q\delta+1\right)^p-1=\sum_{j=1}^{p}\binom{p}{j}p^{j(hq-1)}\gamma^{qj}\delta^j.
\end{equation}
If $\delta$ is greater than $1$, then it has to be divisible by some prime $r$, so that in particular
\begin{displaymath}
q\upsilon_r(\delta)=\upsilon_r\left(\sum_{j=1}^{p}\binom{p}{j}p^{j(hq-1)}\gamma^{qj}\delta^j\right)=\upsilon_r\left(p^{hq-1}\gamma^q\delta+\sum_{j=2}^{p}\binom{p}{j}p^{j(hq-1)}\gamma^{qj}\delta^j\right).
\end{displaymath}
Recalling that $\delta$, $\gamma$, and $p$ are (pairwise) coprime, then the right hand side is exactly $\upsilon_r(\delta)$, which is impossible since $q$ is greater than $1$. Therefore $\delta=1$, and the Equation \eqref{eq:final2} simplifies to
\begin{displaymath}
\sum_{j=2}^p{\binom{p}{j}p^{j(hq-1)}\gamma^{jq}}=0,
\end{displaymath}
which cannot hold because it is a (non-empty) sum of positive integers.


\section{Proof of Theorem \ref{th:main}.\ref{item:(vii)}: $y$ is power of a prime.}\label{sec:ypower}

According to the cases \ref{item:(i)} and \ref{item:(ii)} proved in Sections \ref{sec:caseqeven} and \ref{sec:casepeven}, respectively, if $(x,y,p,q)$ is a solution of the Equation \eqref{eq:main} different from $(3,2,2,3)$ then $p$ and $q$ are [distinct] odd primes. At this point, let us rewrite Equation \eqref{eq:main} as
\begin{equation}\label{eq:main2}
y^q=(x-1)\cdot \left(\frac{x^p-1}{x-1}\right).
\end{equation}
The first factor has to be greater than $1$ by case \ref{item:(iii)}. In addition, the second factor (which is greater than $x-1$) has to be odd, indeed
\begin{displaymath}
\frac{x^p-1}{x-1}=1+x+\cdots+x^{p-1} \equiv 1+x(\underbrace{p-1}_{\mathrm{even}}) \equiv 1 \pmod{2}.
\end{displaymath}
In particular, also $x-1$ has to be odd: indeed, supposing that $y=\ell^c$ for some prime $\ell$ and positive integer $c$, then $x-1=\ell^a$ and $\frac{x^p-1}{x-1}=\ell^b$ for some integers $1\le a < b$ such that $a+b=cq$.

Then, according to Lemma \ref{lemma1}, we obtain
\begin{displaymath}
\upsilon_\ell\left(\frac{x^p-1}{x-1}\right)=\upsilon_\ell(p).
\end{displaymath}
It implies that the greatest common divisor between $x-1$ and $\frac{x^p-1}{x-1}$ is exactly $p$, which in turn is equal to $\ell$. Therefore, we can say that 
\begin{displaymath}
x-1=p \,\,\,\,\,\,\,\text{ and }\,\,\,\,\,\,\,\frac{x^p-1}{x-1}=p^b
\end{displaymath}
for some integer $b \ge 2$. We can conclude that
\begin{displaymath}
p^{b+1}=(x-1)p^b=x^p-1=(p+1)^p-1=\sum_{j=1}^p\binom{p}{j}p^j=p^2+p^3\cdot \frac{p-1}{2}+\sum_{j=3}^p \binom{p}{j}p^j,
\end{displaymath}
which is impossible since the left hand side is divisible by $p^3$ while the right hand side is not.


\section{Proof of Theorem \ref{th:main}.\ref{item:(viii)}: $y\le \min\left(\frac{(pq)^p}{q},\left(\frac{q\sqrt{p}}{2}\right)^p\right)$.}\label{sec:y5p}


As before, 
according to the cases \ref{item:(i)} and \ref{item:(ii)}, 
we can assume that $p$ and $q$ are distinct odd primes. Then, reasoning as in the case \ref{item:(vii)}, we rewrite Equation \eqref{eq:main} in the form \eqref{eq:main2}, and notice that for each odd prime $r$ dividing $x-1$ we get by Lemma \ref{lemma1} 
\begin{displaymath}
\upsilon_r\left(\frac{x^p-1}{x-1}\right)=\upsilon_r(p),
\end{displaymath}
which implies in turn that
\begin{equation}\label{eq:gcd}
\mathrm{gcd}\left(x-1,\frac{x^p-1}{x-1}\right) \,\,\, \mathrm{divides} \,\,\,p.
\end{equation}

Let us suppose, at first, that the greatest common divisor in \eqref{eq:gcd} is $1$, i.e., there exist two coprime integers $a,b \ge 2$ with product $y$ such that
\begin{displaymath}
x-1=a^q \,\,\,\,\,\,\text{ and }\,\,\,\,\,\,\frac{x^p-1}{x-1}=b^q.
\end{displaymath}
In addition, let us assume the following Cassell's result \cite{Cassell}, which can be proved by means of elementary methods.
\begin{lemma}\label{lem:cassell}
Fix integers $x,y\ge 2$ and primes $p,q\ge 3$ such that $|x^p-y^q|=1$. Then $p$ divides $y$ and $q$ divides $x$.
\end{lemma}

Lemma \ref{lem:cassell} immediately implies that $p$ divides $y^q=(x-1)\left(\frac{x^p-1}{x-1}\right)$, hence $p$ divides at least one between $x-1$ and $\frac{x^p-1}{x-1}$. If $p$ divides $x-1$ then, thanks to Lemma \ref{lemma1}, $p$ divides also $\frac{x^p-1}{x-1}$. Conversely, if $p$ divides also $\frac{x^p-1}{x-1}$, then $x^p\equiv 1\pmod{p}$ and, by Euler-Fermat's theorem, $x\equiv 1\pmod{p}$ as well. This contradicts the coprimality assumption between $x-1$ and $\frac{x^p-1}{x-1}$.

On the other hand, let us suppose that the greatest common divisor in \eqref{eq:gcd} is $p$, i.e., there exist two coprime integers $c,d \ge 2$ with product $y$ such that
\begin{displaymath}
x-1=p^{q-1}c^q \,\,\,\,\,\,\text{ and }\,\,\,\,\,\,\frac{x^p-1}{x-1}=pd^q.
\end{displaymath}

Recalling that $q$ divides $x$ by Lemma \ref{lem:cassell}, we obtain 
$$
0\equiv x=1+p^{q-1}c^q \equiv 1+1\cdot c^q \equiv 1+c \pmod{q},
$$
so that there exists $X \in \mathbf{N}^+$ such that
$$
x=1+p^{q-1}(Xq-1)^q.
$$

Rewriting Equation \eqref{eq:main} as
$
(y+1)\left(\frac{y^q+1}{y+1}\right)=x^p
$, 
it can be seen with a similar argument that there exists $Y \in \mathbf{N}$ such that
$$
y=-1+q^{p-1}(pY+1)^p.
$$

Let us suppose that $Y \ge 1$. Then the hypothesis $y \le q^{p-1}p^p$ implies
$$
q^{p-1}p^p \ge y\ge -1+q^{p-1}(p+1)^p > q^{p-1}((p+1)^p-1)>q^{p-1}p^p,
$$
which is false. This forces $Y=0$, so that
$
y=q^{p-1}-1.
$

Finally, the hypothesis $y\le p^{p/2}(q/2)^{p}$ implies
\begin{displaymath}
\begin{split}
1+p^{q-1}\left(q-1\right)^q \le x & = (y^q+1)^{1/p} \le \left(\left(p^{\frac{p}{2}}\left(q/2\right)^{p}\right)^q+1\right)^{1/p}\\
&< \left(\left(p^{p \frac{q-1}{q}}\left(q/2\right)^{p}\right)^q+1\right)^{1/p} \\
&< \left(p^{p \frac{q-1}{q}}\left(q-1\right)^{p}\right)^{q/p}= p^{q-1}\left(q-1\right)^{q},
\end{split}
\end{displaymath}
which is false. This completes the proof.


\section{Closing Remarks}\label{sec:final}

There are many open questions related to Catalan's equation. For instance, Pillai \cite{Pillai36, Pillai45} conjectured in 1936 that, for each $k \in \mathbf{N}^+$, the Diophantine equation
$$
x^p-y^q=k
$$
has only finitely many positive integer solutions $(x,y,p,q)$, with $p\ge 2$ and $q\ge 2$. 

It is worth noting that Erd\"{o}s formulated an even stronger conjecture, i.e., 
$$
a_{n+1}-a_n \gg n^\varepsilon
$$
for some $\varepsilon>0$, where $a_n$ stands for the $n$-th element of the increasing sequence of perfect powers, so that $a_1=1$, $a_2=4$, $a_3=8$, $a_4=9$, $\ldots$.

On a similar note about perfect powers, the banker A. Beal offered in 1993 a \$1 million prize for a proof of the following conjecture: Given $x,y,z,p,q,r \in \mathbf{N}^+$ with $p,q,r>2$ such that
$$
x^p+y^q=z^r
$$
then $\mathrm{gcd}(x,y,z)>1$. Actually, the prize remains unclaimed (however, the abc conjecture would imply that there are at most finitely many counterexamples to Beal's conjecture).

A beautiful survey of open problems and history of classical questions related to perfect powers can be found in \cite{Wald}.


\end{document}